\documentclass[a4paper,10pt]{article}
\usepackage[latin1]{inputenc}
\usepackage[T1]{fontenc}
\usepackage{amsmath}
\usepackage{ifpdf}
\usepackage{amsfonts}
\usepackage{amstext}
\usepackage{amsthm}
\usepackage[final]{pdfpages}
\usepackage[all]{xy}
\usepackage{geometry}
\usepackage{graphicx}
\usepackage{epsfig}
\usepackage{subfigure}
\usepackage{slashed}
\usepackage{color}
\usepackage{amssymb}
\usepackage{srcltx}
\newtheorem{theorem}{Theorem}[section]
\newtheorem{lemma}[theorem]{Lemma}
\newtheorem{defi}[theorem]{Definition}

\newtheorem{cor}[theorem]{Corollary}

\DeclareMathOperator{\ind}{ind}

\DeclareMathOperator{\sfl}{sf}

\title{Bifurcation results for critical points of families of functionals}
\author{Alessandro Portaluri and Nils Waterstraat}

\begin{document}
\date{}
\maketitle

\footnotetext[1]{{\bf 2010 Mathematics Subject Classification: Primary 58E07;
Secondary 58E10.}}
\footnotetext[2]{A. Portaluri was supported by the grant PRIN2009 ``Critical Point Theory and Perturbative Methods
for Nonlinear Differential Equations''.}
\footnotetext[3]{N. Waterstraat was supported by a postdoctoral fellowship of
the German Academic Exchange Service (DAAD).}

\begin{abstract}
Recently the first author studied in \cite{AleBif} the bifurcation of
critical points of 
families of functionals on a Hilbert space, which are parametrised by a compact 
and orientable manifold having a non-vanishing first integral cohomology group.
We improve this
result in two directions: topologically and analytically. From the analytical point
of view we  
generalise it to a broader class of functionals; from the
topological point of view  
we allow the parameter space to be a metrisable Banach manifold. Our methods are in particular powerful if the parameter space is simply connected. As an application of our results we consider families of geodesics in (semi-) Riemannian manifolds. 

\vskip1truecm
\centerline{Dedicated to our mentor Jacobo Pejsachowicz}
\end{abstract}

\section{Introduction}
Let $I:=[0,1]$ denote the unit interval and let $H$ be a separable,
infinite-dimensional real 
Hilbert space. Let $f:I\times H\rightarrow\mathbb{R}$ be a $C^2$ function such
that $0\in H$
is a critical point of all $f_\lambda=f(\lambda,\cdot):H\rightarrow\mathbb{R}$,
$\lambda\in I$. 
An instant $\lambda^\ast\in I$ is called a {\em bifurcation point\/} if every 
neighbourhood of $(\lambda^\ast,0)\in I\times H$ contains points of the 
form $(\lambda,u)$, where $u\neq 0$ is a critical point of $f_\lambda$. The
existence of 
bifurcation points can be studied by considering the associated path of Hessians
$L_\lambda$, $\lambda\in I$, 
of the functions $f_\lambda$ at their critical point $0\in H$. In many geometric variational 
problems these bounded selfadjoint operators are Fredholm.\\
As an example, if $L_\lambda=id-\lambda K$, where $K$ is a selfadjoint compact
operator on $H$, then 
every characteristic value (reciprocal of a non-zero eigenvalue) of $K$ is a
bifurcation point 
according to a classical result of Krasnosel'skii (cf. \cite{Krasnoselskii}).
This result has been 
improved over the years and a bifurcation theorem for general one-parameter
families of functionals 
$f:I\times H\rightarrow\mathbb{R}$ as above was obtained by Fitzpatrick,
Pejsachowicz and 
Recht in \cite{SFLPejsachowicz}. They considered the spectral flow, which is an
integer-valued homotopy
 invariant for paths of bounded selfadjoint Fredholm operators introduced by Atiyah, Patodi and 
Singer in \cite{AtiyahPatodi}. Roughly speaking, the spectral flow of a path
$L$ of selfadjoint
Fredholm operators is the number of negative eigenvalues of $L_0$ that become
positive as the parameter $\lambda$ travels from $0$ to $1$ minus the 
number of positive eigenvalues of $L_0$ that become negative; i.e. the net number of 
eigenvalues which cross zero. 
The main result of \cite{SFLPejsachowicz} shows the existence
of a bifurcation point $\lambda^\ast\in I$ for $f$, if the operators $L_0,L_1$
are invertible and the spectral flow of the path of Hessians $L_\lambda$,
$\lambda\in I$, does not vanish (cf. theorem \ref{Bif} below). Applications of
their result were obtained for bifurcation of periodic orbits of
Hamiltonian systems (cf. \cite{JacoboSFLII}), bifurcation of families of
geodesics in semi-Riemannian manifolds (cf. \cite{MussoPejsachowicz}) and for the 
study of conjugate points in \cite{PicPorTau} and \cite{PicPor}. \\
Recently the first author has considered the more general situation of
$C^2$ functions $f:X\times H\rightarrow\mathbb{R}$, where $X$ is a compact,
orientable smooth manifold of dimension at least $2$ and $0\in H$ is again a
critical point of all functionals $f_\lambda:H\rightarrow\mathbb{R}$,
$\lambda\in X$. In this case the associated family of Hessians $L_\lambda$,
$\lambda\in X$, of $f_\lambda$ at $0\in H$ is a family of bounded selfadjoint
operators on $H$ parametrised by the compact space $X$. If these
operators are in addition Fredholm and strongly indefinite (cf. section 2
below), then one can assign an odd $K$-theory class $\ind_s L\in K^{-1}(X)$,
which was defined by Atiyah, Patodi and Singer in \cite{AtiyahPatodi} and which
can be interpreted as a generalisation of the spectral flow to families. The
main theorem of \cite{AleBif} states that if there exists $\lambda_0\in X$ such
that $L_{\lambda_0}$ is invertible and, moreover, the first Chern class
$c_1(\ind_s L)\in H^1(X;\mathbb{Z})$ is non-trivial, then 

\begin{enumerate}
	\item[i)] $\dim B(f)\geq n-1$, where $\dim$ denotes the Lebesgue
covering dimension,
	\item[ii)] either $B(f)$ disconnects $X$ or it is not contractible to a
point in $X$. 
\end{enumerate} 
The proof of this result uses the bifurcation theorem \cite{SFLPejsachowicz},
\v{C}ech cohomology and Poincaré duality. Note that it is in particular
required that $H^1(X;\mathbb{Z})\neq 0$ and, accordingly, simply connected
parameter spaces like $S^n$, $n\geq 2$, are excluded a priori. In the final
section of \cite{AleBif} the bifurcation theorem is applied to families of
geodesics in semi-Riemannian manifolds. However, since the operators $L$ are
assumed to be strongly indefinite, geodesics in Riemannian manifolds cannot
be treated in this way.\\
The aim of this paper is to improve the results of \cite{AleBif} to more general functionals $f$ and parameter spaces $X$. Our idea is based on the observation that the bifurcation
theorem of \cite{AleBif} can be restated in terms of the spectral flow instead
of $K$-theory classes. Indeed, it is easily seen that the assumptions on the
family $L$ in \cite{AleBif} are equivalent to the existence of a closed path
$\gamma:S^1\rightarrow X$, such that $\gamma(1)$ is invertible and the spectral
flow of $L\circ\gamma$ is non-trivial.\\
Necessarily, our methods are completely different from the ones in \cite{AleBif}.
We assume that $X$ is a metrisable, connected and smooth Banach manifold, but we do not require it to be orientable nor do we make any restrictions on the cohomology
groups of $X$. For each path $\gamma:I\rightarrow X$ in $X$, the composite $L\circ\gamma$ defines a path of selfadjoint Fredholm operators acting on $H$.
If, moreover, $L_{\gamma(0)}$ and $L_{\gamma(1)}$ are invertible, then the
spectral flow $\sfl(L\circ\gamma)$ is defined and assigns to each such path an
integer. Our first theorem states that if we can generate a non-zero integer in
this way, then $B(f)$ either disconnects some open connected subset of $X$ or it has interior points, which is
equivalent to the above i) if $X$ is of finite dimension $n$. Note that, since
we make no assumptions on the cohomology of $X$, $X$ can be contractible and
hence the statement ii) above is obviously wrong in general. However, we
show in a second theorem, whose argument is based on \cite{DimDirac}, that
$B(f)$ disconnects $X$ if $X$ is simply connected and there exists a path
$\gamma:I\rightarrow X$ as above such that $\sfl(L\circ\gamma)\neq 0$. Moreover,
we conclude that $X\setminus B(f)$ has infinitely many path components if we can
find an infinite amount of paths $\gamma$ such that the spectral flows of
$L\circ\gamma$ are pairwise distinct.\\
An important special case, which cannot be treated by the methods in \cite{AleBif}, emerges if the Morse indices of the operators $L$ are finite. Then $\sfl(L\circ \gamma)$ is the difference of the Morse indices of $L_{\gamma(0)}$ and $L_{\gamma(1)}$, so that our invariant does no longer depend on the whole path $\gamma$ but is actually defined for ordered pairs of points in $X$. We want to point out that Morse indices have been computed for the Hessians of many different types of functionals and it is not possible to give an exhaustive list of references here. However, we mention in passing that current work is underway regarding surface theory (cf. \cite{Nayatani}, \cite{Rossman}), the Fermi-Pasta-Ulam problem (cf.\cite{SerraTerracini}), Stokes waves (cf. \cite[\S 11.3]{BuffoniToland}) and semilinear elliptic equations (cf. \cite{Gladiali}, \cite{Dancer}), where in particular families of functionals appear that have infinitely many different Morse indices. Following \cite{AleBif}, we restrict in this paper to applications to families of geodesics in semi-Riemannian manifolds.\\
The paper is structured as follows: in the following section we recall the
definition of the spectral flow, following the approach of
\cite{SFLPejsachowicz}, and we introduce the main theorem of
\cite{SFLPejsachowicz} on the bifurcation of critical points of paths of
functionals. In the third section we state our theorems and discuss some
immediate examples. The fourth section is devoted to the proof of our theorems.
In the final section we consider families of geodesics in semi-Riemannian
manifolds, where we can also treat the case of Riemannian manifolds as well
as non-orientable and non-compact parameter spaces, in contrast to \cite{AleBif}.
Here we assume that the parameter space $X$ is of finite dimension for technical reasons.


\section{Preliminaries: spectral flow and bifurcation of critical points}

Let  $H$ be a real, separable infinite-dimensional Hilbert space. We denote by $\mathcal{L}(H)$ 
the space of all bounded linear operators acting on $H$ endowed
with the topology induced by the operator norm, and by
$\Phi_S(H)\subset\mathcal{L}(H)$ the subspace of all selfadjoint Fredholm
operators. Recall that $\Phi_S(H)$ consists of three connected components. Two
of them are given by

\begin{align*}
\Phi^+_S(H)=\{L\in \Phi_S(H):\sigma_{ess}(L)\subset(0,\infty)\},\qquad
\Phi^-_S(H)=\{L\in \Phi_S(H):\sigma_{ess}(L)\subset(-\infty,0)\}
\end{align*}
and their elements are called essentially positive or essentially negative,
respectively. Both of them are contractible as topological spaces. Elements of
the remaining component $\Phi^i_S(H)=\Phi_S(H)\setminus(\Phi^+_S(H)\cup
\Phi^-_S(H))$ are called strongly indefinite. $\Phi^i_S(H)$ has the same
homotopy groups as the stable orthogonal group and the \textit{spectral flow},
which we now want to introduce briefly, induces an isomorphism between its
fundamental group and the integers. We follow the approach developed by
Fitzpatrick, Pejsachowicz and Recht in \cite{SFLPejsachowicz}.\\
If $S,T$ are two selfadjoint invertible operators such that $S-T$ is compact,
then the difference of their spectral projections with respect to any spectral
subset is compact as well. Hence, denoting by $E_-(\cdot)$, $E_+(\cdot)$ the
negative and positive subspaces of a selfadjoint operator, the \textit{relative
Morse index}

\begin{align*}
\mu_{rel}(S,T)=\dim(E_-(S)\cap E_+(T))-\dim(E_+(S)\cap E_-(T))
\end{align*}
is well defined and finite.\\
The group $GL(H)$ of all invertible operators on $H$ acts on $\Phi_S(H)$ by
\textit{cogredience}, sending $M\in GL(H)$ to $M^\ast LM$, $L\in\Phi_S(H)$. This
induces an action of paths in $GL(H)$ on paths in $\Phi_S(H)$. One of the main
theorems in \cite{SFLPejsachowicz} states that for any path
$L:I\rightarrow\Phi_S(H)$ there exist a path $M:I\rightarrow GL(H)$ and an invertible operator $J\in\Phi_S(H)$, such that $M^\ast_tL_tM_t=J+K_t$ with $K_t$ compact for each $t\in[0,1]$.

\begin{defi}
Let $L:I\rightarrow\Phi_S(H)$ be a path such that $L_0$ and $L_1$ are invertible. Then
the \textit{spectral flow} of $L$ is the integer

\begin{align*}
\sfl(L)=\mu_{rel}(J+K_0,J+K_1),
\end{align*}
where $\{J+K_t\}_{t \in I}$ is any path of compact perturbations of an invertible operator $J\in\Phi_S(H)$ which is cogredient with
$\{L_t\}_{t \in I}$ in the sense above.
\end{defi}
It follows from general properties of the relative Morse index that the spectral
flow does not depend on the choices of $J$ and $K$. Moreover, it is obviously
preserved by cogredience.\\
The main properties of the spectral flow are:

\begin{itemize}
	\item if $L_{t}$ is invertible for all $t\in I$, then $\sfl L=0$.
	\item If $L^1$, $L^2$ have invertible ends and the concatenation
$L^1\ast L^2$ is defined, then $\sfl(L^1\ast L^2)=\sfl(L^1)+\sfl(L^2)$.
	\item Let $h:I\times I\rightarrow\Phi_S(H)$ be a homotopy such that
$h(s,0)$ and $h(s,1)$ are invertible for all $s\in I$. Then
	\begin{align*}
	\sfl(h(0,\cdot))=\sfl(h(1,\cdot)).
	\end{align*}
  \item If $L_t\in\Phi^+(H)$, $t\in I$, and $L_0, L_1$ are invertible, then the spectral flow of $L$ is the difference of the Morse indices at its endpoints:
	
	\begin{align*}
	\sfl(L)=\mu_{Morse}(L_0)-\mu_{Morse}(L_1).
	\end{align*}
	\item If $L$ has invertible ends and $\tilde{L}_t=L_{1-t}$, $t\in I$, then $\sfl\tilde{L}=-\sfl L$.
\end{itemize}
Finally, we introduce the main result of \cite{SFLPejsachowicz} on bifurcation
of critical points of paths of functionals. 

\begin{theorem}\label{Bif}
Let $f:I\times H\rightarrow\mathbb{R}$ be a $C^2$ function such that for each
$\lambda\in I$, $0$ is a critical point of the functional
$f_\lambda=f(\lambda,\cdot)$. Assume that the Hessians $L_\lambda$ of
$f_\lambda$ at $0$ are Fredholm operators and that $L_0$ and $L_1$ are
invertible. If $\sfl L\neq 0$, then every neighbourhood of $I\times\{0\}$ in
$I\times H$ contains points of the form $(\lambda,u)$, where $u\neq 0$ is a
critical point of $f_\lambda$.
\end{theorem}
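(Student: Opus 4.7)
The plan is to argue by contradiction: assume that no bifurcation point exists and derive $\sfl L=0$. By the $C^2$ regularity of $f$ and the compactness of $I$, this assumption furnishes an $\varepsilon>0$ such that $\nabla f_\lambda(u)\neq 0$ for every $\lambda\in I$ and every $0<\|u\|\leq\varepsilon$, where $\nabla$ denotes the gradient in the $u$-variable.

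The first step is a reduction to normal form via the cogredience theorem recalled in Section~2: there exist a continuous path $M:I\rightarrow GL(H)$ and a fixed invertible $J\in\Phi_S(H)$ such that $M_\lambda^\ast L_\lambda M_\lambda=J+K_\lambda$ with $K_\lambda$ compact. Performing the substitution $u\mapsto M_\lambda u$ in $f$ yields an equivalent family $\tilde f_\lambda(w):=f_\lambda(M_\lambda w)$ whose Hessians at $0$ are precisely $J+K_\lambda$, and the no-bifurcation hypothesis is preserved after possibly shrinking $\varepsilon$. Composition with $J^{-1}$ then realises $G_\lambda:=J^{-1}\nabla\tilde f_\lambda$ as a compact perturbation of the identity whose only zero in $B_\varepsilon(0)$ is the origin.

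Since $G_\lambda=\mathrm{Id}+\mathrm{compact}$, the Leray--Schauder degree $\deg_{\mathrm{LS}}(G_\lambda,B_\varepsilon(0),0)$ is well defined and, by homotopy invariance, independent of $\lambda$. At the invertible endpoints it equals $(-1)^{m(L_\lambda)}$, where $m$ denotes the finite relative Morse index of $J+K_\lambda$ with respect to $J$. Combined with the identity $\sfl L = m(L_0)-m(L_1)$, which is immediate from the definition of the spectral flow via the relative Morse index, the equality of degrees forces $\sfl L\equiv 0\pmod 2$. This already yields the contradiction whenever $\sfl L$ is odd.

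The step I expect to be the main obstacle is upgrading this parity argument to the full integer statement, since the Leray--Schauder degree only sees $\sfl L\bmod 2$. The natural remedy is a Galerkin reduction: project onto an increasing sequence of finite-dimensional subspaces $V_n\subset H$ adapted to the compact family $K_\lambda$, so that for $n$ large the restricted family $f_\lambda^n=f_\lambda|_{V_n}$ has spectral flow equal to $\sfl L$ and still has $0$ as an isolated critical point uniformly in $\lambda$. In this finite-dimensional setting the continuation invariance of the Conley index, whose value at a non-degenerate critical point is a pointed sphere of dimension equal to the Morse index, forces $m(L_0^n)=m(L_1^n)$ and contradicts $\sfl L\neq 0$. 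Ensuring that the Galerkin truncation preserves the isolation of $0$ uniformly in $n$ and $\lambda$ is the technically delicate point of the argument.
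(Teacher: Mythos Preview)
The paper does not prove this theorem: it is quoted in Section~2 as the main result of \cite{SFLPejsachowicz} and used thereafter as a black box, so there is no argument in the present paper against which to compare your proposal.

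On the substance of your sketch, the Leray--Schauder step contains a gap. You assert that $G_\lambda=J^{-1}\nabla\tilde f_\lambda$ is a compact perturbation of the identity, but only its \emph{linearisation} at the origin, namely $I+J^{-1}K_\lambda$, has that form. The nonlinear remainder $r_\lambda(u)=\nabla\tilde f_\lambda(u)-(J+K_\lambda)u$ is $o(\|u\|)$ by the $C^2$ hypothesis, yet nothing in the assumptions forces $J^{-1}r_\lambda$ to be a compact map, so $\deg_{\mathrm{LS}}(G_\lambda,B_\varepsilon(0),0)$ is not a priori defined and the homotopy through $\lambda\in I$ need not stay in the admissible class. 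The parity conclusion therefore does not follow as written. Your final paragraph is closer to the actual strategy of \cite{SFLPejsachowicz}: after the cogredience step they perform a finite-dimensional saddle-point (Lyapunov--Schmidt type) reduction and then invoke critical-group/Conley-index continuation on the reduced functional. That reduction, unlike a raw Galerkin truncation, directly produces a finite-dimensional family for which the no-bifurcation hypothesis yields isolation of $0$, so the uniform-isolation difficulty you flag is handled by choosing the correct reduction rather than by estimates on Galerkin approximants.
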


As already mentioned in the introduction, the aim of this paper is to generalise
theorem \ref{Bif} to families of functionals which are parametrised by Banach
manifolds instead of the unit interval.

\section{The theorems}
Let $H$ be a Hilbert space and $X$ a metrisable, connected and smooth Banach
manifold (cf. \cite[II]{Lang}). Let $f:X\times H\rightarrow\mathbb{R}$ be a
$C^2$ function such that $0\in H$ is a critical point of all functionals
$f_\lambda=f(\lambda,\cdot):H\rightarrow\mathbb{R}$, $\lambda\in X$. We refer to
$X\times\{0\}$ as the trivial branch of critical points of the family
$\{f_\lambda\}_{\lambda\in X}$ and we denote henceforth by $L_\lambda$ the
associated Hessian of $f_\lambda$ at $0$, $\lambda\in X$. Note that each
$L_\lambda$ is a bounded selfadjoint operator on the Hilbert space $H$ and the
family $L:X\rightarrow\mathcal{L}(H)$ depends continuously on $\lambda\in X$. In
what follows we assume that all $L_\lambda$, $\lambda\in X$, are Fredholm
operators.\\
We call a path $\gamma:I\rightarrow X$ admissible if $L_{\gamma(0)}$ and
$L_{\gamma(1)}$ are invertible. In this case $L_{\gamma(t)}$, $t\in I$, defines
a path in $\Phi_S(H)$ having invertible endpoints and, accordingly, the spectral
flow of $L\circ\gamma$ is well defined, which we will denote henceforth by
$\sfl(\gamma,L)$.\\  
We call $\lambda^\ast\in X$ a \textit{bifurcation point of critical points} from
the trivial branch if there exist a sequence $\lambda_n\rightarrow\lambda^\ast$
in $X$ and a sequence $u_n\rightarrow 0$ in $H$ such that $u_n$ is a non-zero critical point of $f_{\lambda_n}$, $n\in\mathbb{N}$. We denote by $B(f)\subset
X$ the set of all bifurcation points and observe that

\begin{align}\label{singularset}
B(f)\subset\Sigma(L):=\{\lambda\in X:\ker L_\lambda\neq \{0\}\}
\end{align}
by the inverse function theorem. It is a direct consequence of the definition that $B(f)$ is closed in $X$. 
With all this in place, our first theorem reads as follows:

\begin{theorem}\label{theoremI}
Let $X$ admit smooth partitions of unity. If there exists an admissible path $\gamma$
in $X$ such that $\sfl(\gamma,L)\neq 0$, then either $B(f)$ has interior points
or it disconnects some open connected subset of $X$. 
\end{theorem}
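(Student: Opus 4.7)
The natural approach is proof by contradiction. Assume that $B(f)$ has empty interior and does not disconnect any open connected subset of $X$. Under these hypotheses, the plan is to construct a $C^2$ path $\tilde{\gamma}:I\to X$ from $\gamma(0)$ to $\gamma(1)$ that is homotopic to $\gamma$ rel endpoints and satisfies $\tilde{\gamma}(I)\cap B(f)=\emptyset$. Given such a $\tilde{\gamma}$, the homotopy invariance of the spectral flow (listed among its properties in Section~2) yields $\sfl(\tilde{\gamma},L)=\sfl(\gamma,L)\neq 0$. Applying Theorem \ref{Bif} to the one-parameter family $(t,u)\mapsto f(\tilde{\gamma}(t),u)$ on $I\times H$ then produces a bifurcation instant $t^\ast\in I$ with $\tilde{\gamma}(t^\ast)\in B(f)$, contradicting the construction of $\tilde{\gamma}$.

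To produce $\tilde{\gamma}$, I would exploit the compactness of $\gamma(I)$ together with the Banach manifold structure of $X$. Cover $\gamma(I)$ by finitely many chart domains $U_1,\dots,U_n$, each identified with a convex open subset of a model Banach space, and choose a subdivision $0=t_0<t_1<\cdots<t_n=1$ with $\gamma([t_{i-1},t_i])\subset U_i$. The hypothesis that $B(f)$ does not disconnect the open connected set $U_i$ implies that $U_i\setminus B(f)$ is connected; since this set is open in the locally path-connected manifold $X$, it is also path-connected. The empty-interior hypothesis on $B(f)$ lets me pick a perturbation $p_i\in(U_i\cap U_{i+1})\setminus B(f)$ of each interior junction point $\gamma(t_i)$, setting $p_0=\gamma(0)$ and $p_n=\gamma(1)$. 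Joining $p_{i-1}$ to $p_i$ by a smooth path $\eta_i$ inside $U_i\setminus B(f)$ and concatenating, then smoothing the joins by means of the smooth partitions of unity on $X$, yields a $C^2$ path $\tilde{\gamma}$.

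The homotopy $\gamma\simeq\tilde{\gamma}$ rel endpoints would be built chart by chart. Fix short auxiliary segments $\alpha_i$ inside $U_i\cap U_{i+1}$ from $\gamma(t_i)$ to $p_i$. Within the convex chart $U_i$ one can interpolate between $\gamma|_{[t_{i-1},t_i]}$ and $\eta_i$ subject to the boundary data $\alpha_{i-1},\alpha_i$ by a bilinear homotopy in the chart, which remains inside $U_i$ by convexity. Gluing these chart-wise homotopies along the subdivision yields a continuous map $h:I\times I\to X$ with $h(0,\cdot)=\gamma$, $h(1,\cdot)=\tilde{\gamma}$, and $h(s,0), h(s,1)$ constant in $s$; composing with $L$ gives the homotopy of paths of selfadjoint Fredholm operators with invertible endpoints to which the homotopy invariance of $\sfl$ applies.

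The main obstacle I anticipate is not the invocation of Theorem \ref{Bif}, which is essentially immediate once $\tilde{\gamma}$ is in hand, but rather the chart-by-chart construction, especially at the subdivision points $\gamma(t_i)$ which may themselves lie in $B(f)$. Since $B(f)$ is merely a closed subset of $X$ without any assumed stratified or transversal structure, the only tools for pushing the path off $B(f)$ are the two hypotheses on $B(f)$ combined with the smooth partitions of unity on $X$; arranging the local perturbations so that they cohere into a single continuous (and in fact $C^2$) homotopy is the technical heart of the argument.
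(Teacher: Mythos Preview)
Your proposal is correct and follows the paper's overall strategy: assume for contradiction that $B(f)$ has empty interior and disconnects no open connected subset, build an avoiding path $\tilde{\gamma}$ from $\gamma(0)$ to $\gamma(1)$, exhibit a homotopy $\gamma\simeq\tilde{\gamma}$ rel endpoints, and then invoke Theorem~\ref{Bif} on $\tilde{f}(t,u)=f(\tilde{\gamma}(t),u)$ to force a bifurcation point on $\tilde{\gamma}(I)$.

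The one genuine technical difference is in how the homotopy is produced. The paper uses the hypothesis that $X$ admits smooth partitions of unity to fix a \emph{spray} on $X$, obtains the associated exponential map, and then writes down the homotopy in closed form as $h(s,t)=\exp\bigl(s\cdot(F|_{\tilde D})^{-1}(\gamma(t),\tilde{\gamma}(t))\bigr)$ once $\tilde{\gamma}$ is built $\varepsilon$-close to $\gamma$ (the avoiding path itself is constructed by a metric-ball chaining argument rather than via charts, but this is essentially your chart construction in different clothing). Your chart-by-chart convex interpolation with auxiliary arcs $\alpha_i$ works too, and is arguably more elementary since it avoids the machinery of sprays; its cost is the bookkeeping at the junctions $t_i$, which you correctly identify as the delicate point. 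Either route lands on the same contradiction.
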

Note that by inclusion \eqref{singularset} we can possibly exclude one of these
alternatives if $\Sigma(L)$ is known. Moreover, observe that every paracompact smooth Hilbert manifold admits smooth partitions of unity (cf. \cite[Corollary II.3.8]{Lang}). \\
From elementary dimension theory (cf. e.g. \cite[5,\S 2]{Dimension}) we obtain
the following corollary:

\begin{cor}\label{cor}
If $\dim X=n<\infty$ and the assumptions of theorem \ref{theoremI} hold, then
$\dim B(f)\geq n-1$, where $\dim$ denotes the Lebesgue covering dimension.
\end{cor}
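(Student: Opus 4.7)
The plan is to apply Theorem~\ref{theoremI} and treat the two alternatives separately, in each case reducing to a standard fact from classical dimension theory for finite-dimensional manifolds. Throughout, we use that $X$ is locally homeomorphic to $\mathbb{R}^n$ and that the Lebesgue covering dimension is monotone with respect to closed subsets.

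In the first case, $B(f)$ has interior points, so it contains a non-empty open subset $V\subset X$. Since $X$ is an $n$-manifold, $V$ contains a chart homeomorphic to an open subset of $\mathbb{R}^n$, hence $\dim V=n$. Monotonicity then gives $\dim B(f)\geq n>n-1$.

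In the second case, there exists a connected open set $U\subset X$ such that $U\setminus B(f)$ is disconnected. Choosing a chart, we may shrink $U$ to a connected open subset of $\mathbb{R}^n$ that is still separated by $C:=B(f)\cap U$, which is closed in $U$. The main input is then the classical separation theorem from \cite[5,\S 2]{Dimension}: a closed subset of $\mathbb{R}^n$ which separates two points of a connected open set $U\subset\mathbb{R}^n$ has covering dimension at least $n-1$. Applying this yields $\dim C\geq n-1$, and by monotonicity $\dim B(f)\geq\dim C\geq n-1$.

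The statement will be immediate once the two cases are identified; the only non-trivial point is the invocation of the classical separation theorem, and the main thing to be careful about is the reduction from the manifold $X$ to the Euclidean setting where that theorem applies. Since separation is a local property (it is enough that $C$ separates some open connected subset of $U$ after passing to a chart), and the Lebesgue dimension is both local and monotone, no further work is needed.
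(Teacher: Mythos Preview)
Your proof is correct and matches the paper's approach: the paper gives no explicit argument and simply cites elementary dimension theory \cite[5,\S 2]{Dimension}, while you spell out the two cases from Theorem~\ref{theoremI} and invoke the same reference for the separation theorem. One small remark: the reduction to a single chart in the second case is a slight detour (it works once you pick $p\in\overline{A}\cap\overline{B}$ for two distinct components $A,B$ of $U\setminus B(f)$, using that $B(f)$ has empty interior in the remaining case), but you could also apply directly the fact that every connected $n$-manifold is a Cantor manifold, which is what the cited section of \cite{Dimension} actually provides.
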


The assertion of the corollary is the main result of the first author's article
\cite{AleBif}, in which it is assumed in addition that $X$ is compact and
orientable, that the Hessians $L_\lambda$, $\lambda\in X$, are strongly
indefinite and that the path $\gamma$ is closed.\\
In case $X$ is simply connected, theorem \ref{theoremI} can be
improved like this:

\begin{theorem}\label{theoremII}
Let $X$ be simply connected.
\begin{itemize}
	\item[i)] If there exists an admissible path $\gamma$ such that
$\sfl(\gamma,L)\neq 0$, then $B(f)$ disconnects $X$.
	\item[ii)] If there exists a sequence of admissible paths $\{\gamma_k\}_{k \in \mathbb{N}}$,
such that 

\begin{align*}
\lim_{k\to+\infty}|\sfl(\gamma_k,L)|=+\infty,
\end{align*}
then $X\setminus B(f)$ has infinitely many path components.   
\end{itemize}
\end{theorem}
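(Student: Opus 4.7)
The plan is to use simple-connectedness of $X$ to show that the spectral flow of an admissible path depends only on its endpoints, and then to combine this rigidity with Theorem \ref{Bif} to force every admissible path contained in $X\setminus B(f)$ to have vanishing spectral flow.

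Fix $\lambda_{0}\in X_{0}:=\{\lambda\in X:L_\lambda\text{ invertible}\}$ (such $\lambda_{0}$ exists; take an endpoint of $\gamma$ or of $\gamma_{1}$). Since $X$ is connected and, as a Banach manifold, locally path connected, $X$ is path connected; together with $\pi_{1}(X)=0$, this means that any two paths in $X$ with the same endpoints are homotopic relative to the endpoints. Applying the homotopy invariance of the spectral flow to the continuous homotopy $L\circ h\colon I\times I\to\Phi_{S}(H)$ yields $\sfl(\alpha,L)=\sfl(\alpha',L)$ whenever $\alpha,\alpha'$ are admissible paths sharing both endpoints. Hence the assignment
\[
\mu\colon X_{0}\to\mathbb{Z},\qquad \mu(\lambda):=\sfl(\alpha,L),
\]
where $\alpha$ is any path from $\lambda_{0}$ to $\lambda$, is well defined; and by the concatenation property of the spectral flow, $\sfl(\gamma,L)=\mu(\gamma(1))-\mu(\gamma(0))$ for every admissible $\gamma$.

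The main step is the observation that $\mu$ is constant on $X_{0}\cap C$ for every path component $C$ of the open set $X\setminus B(f)$. Indeed, for $\lambda,\lambda'\in X_{0}\cap C$ any path $\beta$ from $\lambda$ to $\lambda'$ inside $C$ is admissible and satisfies $\beta(I)\subset X\setminus B(f)$; were $\sfl(\beta,L)\neq 0$, Theorem \ref{Bif} applied to $(t,u)\mapsto f(\beta(t),u)$ would produce $t^{\ast}\in I$ together with sequences $t_{n}\to t^{\ast}$ and nonzero $u_{n}\to 0$ with $u_{n}$ a critical point of $f_{\beta(t_{n})}$, exhibiting $\beta(t^{\ast})$ as a bifurcation point of $f$, a contradiction. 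This is the step I expect to be the most delicate, as it is where the interplay between spectral flow rigidity and the bifurcation theorem is invoked; the verification that a bifurcation point of the restricted one-parameter family is automatically a bifurcation point of the ambient family indexed by $X$ is routine from the definitions.

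Both parts then follow quickly. For i), if $B(f)$ did not disconnect $X$, then $X\setminus B(f)$ would be path connected (open subset of a locally path-connected space), so the endpoints of $\gamma$ would lie in the same component and the main step would give $\mu(\gamma(0))=\mu(\gamma(1))$, whence $\sfl(\gamma,L)=0$, contradicting the hypothesis. For ii), if $X\setminus B(f)$ had only finitely many path components $C_{1},\dots,C_{N}$, then $\mu$ would attain at most $N$ values on $X_{0}$; but $\mu(\gamma_{k}(1))-\mu(\gamma_{k}(0))=\sfl(\gamma_{k},L)$ together with $|\sfl(\gamma_{k},L)|\to\infty$ would force $\{\mu(\gamma_{k}(0)),\mu(\gamma_{k}(1)):k\in\mathbb{N}\}\subset\mathbb{Z}$ to be unbounded, and hence infinite, a contradiction.
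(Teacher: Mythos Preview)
Your argument is correct and rests on the same two ingredients as the paper's proof: simple connectedness forces the spectral flow of an admissible path to depend only on its endpoints, and Theorem~\ref{Bif} forces any admissible path avoiding $B(f)$ to have vanishing spectral flow. The organisation, however, is different and in part ii) genuinely more economical. The paper proves i) by closing up $\gamma$ with a return path $\tilde\gamma$ in $X\setminus B(f)$ and contracting the resulting loop, and then proves ii) by an inductive construction of paths $\tilde\gamma_k$ from a common basepoint with pairwise distinct spectral flows, followed by a separate argument that their endpoints lie in distinct components. Your introduction of the potential $\mu:X_0\to\mathbb{Z}$ subsumes both steps at once: i) becomes the observation that a single component forces $\mu$ constant, and ii) reduces to the pigeonhole remark that finitely many components allow only finitely many values of $\mu$, contradicting $|\mu(\gamma_k(1))-\mu(\gamma_k(0))|\to\infty$. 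This avoids the paper's inductive construction entirely.

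One technical point you glossed over (and which the paper handles explicitly via Lemma~\ref{approx}): in the ``main step'' you apply Theorem~\ref{Bif} to $(t,u)\mapsto f(\beta(t),u)$, but this map is $C^2$ only if $\beta$ is at least $C^2$, whereas an arbitrary path in the component $C$ is merely continuous. Since $B(f)$ is closed, $X\setminus B(f)$ is open, and you can replace $\beta$ by a nearby smooth path with the same endpoints and image still in $X\setminus B(f)$; the spectral flow is unchanged because, by your own endpoint-dependence argument, $\sfl(\cdot,L)$ depends only on the endpoints. With this small fix the proof is complete.
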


In case of a simply connected parameter space $X$, theorem
\ref{theoremII} $i)$ implies theorem \ref{theoremI}. Moreover, note that we do not
require $X$ to admit smooth partitions of unity in theorem \ref{theoremII}.\\
We close this section by providing two immediate examples. The first makes
theorem \ref{theoremII} reminiscent of the classical Krasnosel'skii theorem for
variational bifurcation (cf. \cite{Krasnoselskii}), which we have already
mentioned in the introduction. Let $\mathcal{K}_{s}(H)$ denote the closed
subspace of $\mathcal{L}(H)$ consisting of all compact selfadjoint operators
acting on $H$. Let $g:\mathcal{K}_s(H)\times H\rightarrow\mathbb{R}$ be a $C^2$ function such that $g(K,u)=o(\|u\|^2)$, $u\rightarrow 0$, for all $K\in\mathcal{K}_s(H)$. We consider the family of functionals

\begin{align*}
f:\mathcal{K}_s(H)\times H\rightarrow\mathbb{R},\quad f(K,u)=\frac{1}{2}\langle
(id+K)u,u\rangle_H+g(K,u)
\end{align*}
and note that $0\in H$ is a critical point of each $f_K$. The Hessian of $f_K$
at $0\in H$ is given by $L_K=id+K\in\Phi^+_S(H)$, $K\in\mathcal{K}_s(H)$.\\
Now let $\{e_i\}_{i\in\mathbb{N}}$ be a complete orthonormal system of $H$ and
define a sequence of compact selfadjoint operators by

\begin{align*}
K_nu=-2\sum^n_{i=1}{\langle u,e_i\rangle e_i},\quad n\in\mathbb{N},
\end{align*} 
and a sequence of paths by

\begin{align*}
\gamma_n:I\rightarrow\mathcal{K}_S(H),\quad \gamma_n(t)=tK_n.
\end{align*}
It is clear that each path $\gamma_n$ is admissible with respect to $L$ and we
obtain from the very definition of the spectral flow that

\begin{align*}
\sfl(\gamma_n,L)=\mu_{rel}(id+K_n,id)=\dim(E_-(id+K_n)\cap
E_+(id))-\dim(E_+(id+K_n)\cap E_-(id))=n.
\end{align*}
We conclude from theorem \ref{theoremII} ii) that $\mathcal{K}_s(H)\setminus
B(f)$ has infinitely many path components.\\
Our second example shows that simply connectedness is a necessary assumption
in theorem \ref{theoremII}. It is easy to construct a family of functionals
$f:S^1\times H\rightarrow\mathbb{R}$ such that the associated family of Hessians
$L_\lambda$ is not invertible precisely at $1\in S^1$ and $\sfl L\neq 0$. Hence
$B(f)=\{1\}\subset S^1$ by theorem \ref{Bif}. Now we consider the torus
$T=S^1\times S^1$ and the family of functionals 

\begin{align*}
\tilde{f}:T\times H\rightarrow\mathbb{R},\quad
\tilde{f}(\lambda_1,\lambda_2,u)=f(\lambda_1,u).
\end{align*}
The path $\gamma:S^1\rightarrow T$, $\gamma(\lambda)=(\lambda,1)$, is such that
$\sfl(\gamma,L)\neq 0$ and hence we infer from corollary \ref{cor} that $\dim
B(\tilde{f})\geq 1$, whereas theorem \ref{theoremII} cannot be applied to
$\tilde{f}$ since $T$ is not simply connected. Indeed, $B(\tilde{f})=\{1\}\times
S^1$ which does not disconnect $T$. Note, however, that $B(\tilde{f})$ is not
contractible to a point in $X$, in accordance with \cite{AleBif}.

\section{The proofs}
We fix a compatible metric $\rho$ on the smooth Banach manifold $X$, which we have
assumed to be metrisable, and denote henceforth by $B(\lambda,\varepsilon)$ the
open ball of radius $\varepsilon$ around $\lambda\in X$. The metric $\rho$
induces a metric on the space $C(I,X)$ of all continuous paths in $X$ by
$d(\gamma_1,\gamma_2)=\sup_{t\in I}\rho(\gamma_1(t),\gamma_2(t))$. We will use
several times the following elementary approximation result, whose proof we
leave to the reader.

\begin{lemma}\label{approx}
Let $\gamma_1:I\rightarrow X$ be a continuous path and $\varepsilon>0$. Then
there exists a smooth path $\gamma_2:I\rightarrow X$ such that
$\gamma_2(0)=\gamma_1(0)$, $\gamma_2(1)=\gamma_1(1)$ and
$d(\gamma_1,\gamma_2)<\varepsilon$.
\end{lemma}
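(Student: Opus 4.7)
The plan is to reduce the construction to a Banach-space-valued approximation problem carried out piecewise in a finite family of charts, and then to glue the smoothed pieces together. Since $\gamma_1(I)$ is compact in $X$, I first choose a finite partition $0 = t_0 < t_1 < \cdots < t_N = 1$ and, for each $i \in \{1, \ldots, N\}$, a chart $\varphi_i : U_i \to V_i$ of $X$ (with values in the Banach space $E$ modelling $X$) such that $\gamma_1([t_{i-1}, t_i]) \subset U_i$.

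In each chart I approximate $c_i := \varphi_i \circ \gamma_1|_{[t_{i-1}, t_i]}$ by a smooth $E$-valued curve $\tilde{c}_i$ which agrees with $c_i$ at the two endpoints and all of whose derivatives vanish there. The Bernstein polynomials $B_n(c_i)$ are $E$-valued polynomial curves on $[t_{i-1}, t_i]$ agreeing with $c_i$ at $t_{i-1}$ and $t_i$ and converging uniformly to $c_i$ as $n\to\infty$: the classical scalar argument via Chebyshev's inequality for the binomial distribution extends verbatim to continuous Banach-valued functions, since only the scalar weights depend on $n$. I then pre-compose with a smooth surjection $\phi_i : [t_{i-1},t_i] \to [t_{i-1},t_i]$ that fixes the endpoints and has all higher derivatives zero there (the standard plateau construction). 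Setting $\tilde{c}_i := B_{n_i}(c_i) \circ \phi_i$ gives a smooth curve matching $c_i$ at $t_{i-1}$ and $t_i$, with all derivatives equal to zero at those points (by the chain rule), which can be made uniformly as close to $c_i$ as desired by taking $n_i$ large.

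Setting $\gamma_2(t) := \varphi_i^{-1}(\tilde{c}_i(t))$ for $t \in [t_{i-1}, t_i]$ defines a continuous path in $X$ which fixes $\gamma_1(0)$ and $\gamma_1(1)$, since at each interior node the two adjacent pieces both agree with $\gamma_1$ there. Smoothness of $\gamma_2$ across each interior node is forced by the vanishing of all derivatives of $\tilde{c}_i$ and $\tilde{c}_{i+1}$ at $t_i$ combined with the smoothness of the $\varphi_j^{-1}$: by the chain rule all one-sided derivatives of both pieces vanish at $t_i$, so the pieces glue $C^\infty$. The estimate $d(\gamma_1, \gamma_2) < \varepsilon$ is obtained by choosing the $n_i$ large enough that $\|\tilde{c}_i(t) - c_i(t)\|_E$ is below a threshold dictated by the uniform continuity of each $\varphi_i^{-1}$ on a relatively compact neighbourhood of $\varphi_i(\gamma_1([t_{i-1}, t_i]))$.

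The main obstacle is achieving global $C^\infty$-smoothness at the interior partition points, where the ambient chart changes and there is no a priori compatibility between derivatives computed in different charts. The plateau reparametrization $\phi_i$ is designed to sidestep this: it forces all derivatives of each local piece to vanish at the matching points, so the glueing is automatic and independent of the transition functions between charts.
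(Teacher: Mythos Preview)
The paper does not prove this lemma; it explicitly leaves the argument to the reader, so there is nothing to compare against and the only question is whether your construction stands on its own.

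Your overall strategy---chart-by-chart polynomial approximation followed by a plateau reparametrisation to kill all derivatives at the nodes, so that the pieces glue $C^\infty$ regardless of the transition maps---is sound and is a natural approach. There is, however, a real gap in the approximation step. You assert that $\tilde c_i = B_{n_i}(c_i)\circ\phi_i$ can be made uniformly close to $c_i$ ``by taking $n_i$ large''. This is not true for a \emph{fixed} plateau $\phi_i$: as $n_i\to\infty$ one has $B_{n_i}(c_i)\to c_i$ uniformly, hence $\tilde c_i\to c_i\circ\phi_i$, not $c_i$. The residual term $\sup_t\|c_i(\phi_i(t))-c_i(t)\|$ does not depend on $n_i$ and is in general bounded away from zero. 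The repair is easy: also choose $\phi_i$ close to the identity in the sup norm (plateau maps with this property exist for every prescribed tolerance), and only then choose $n_i$. The triangle inequality
\[
\|\tilde c_i-c_i\|_\infty\le\|B_{n_i}(c_i)-c_i\|_\infty+\sup_t\|c_i(\phi_i(t))-c_i(t)\|
\]
then makes both terms small, the second by uniform continuity of $c_i$.

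A minor terminological point: ``relatively compact neighbourhood'' is inaccurate when the model space $E$ is infinite-dimensional, since nonempty open subsets of $E$ are never relatively compact. What you actually need---and what does hold---is that $\varphi_i^{-1}$ is uniformly continuous near the compact set $c_i([t_{i-1},t_i])$, in the sense that for every $\eta>0$ there is $\delta>0$ with $\rho(\varphi_i^{-1}(y),\varphi_i^{-1}(z))<\eta$ whenever $z\in c_i([t_{i-1},t_i])$ and $\|y-z\|_E<\delta$. This follows from continuity of $\varphi_i^{-1}$ together with compactness of the image of $c_i$, and it suffices both to keep $\tilde c_i$ inside $V_i$ and to transfer the $E$-norm estimate to the metric $\rho$ on $X$.
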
 
 
We now prove our theorems \ref{theoremI} and \ref{theoremII} in two separate
sections.

\subsection{Proof of theorem \ref{theoremI}}
We assume that $B(f)$ neither has interior points nor disconnects some connected open subset of $X$, and that $\gamma:I\rightarrow X$ is an admissible path such that
$\sfl(\gamma,L)\neq 0$. Our aim is to reach a contradiction.\\
We fix any spray on $X$, which exists since we assume that $X$ admits smooth partitions
of unity (cf. \cite[Theorem IV.3.1]{Lang}). Let $\exp:\mathcal{D}\rightarrow X$
denote the associated exponential map of this spray, where $\mathcal{D}\subset
TX$ denotes its open maximal domain. We obtain a smooth map

\begin{align*}
F:\mathcal{D}\rightarrow X\times X,\quad F(v)=(\pi(v),\exp_{\pi(v)}(v)),
\end{align*}
where $\pi:TX\rightarrow X$ is the projection of the tangent bundle. By
\cite[Prop. VIII.5.1]{Lang}, $F$ is a local diffeomorphism on the zero section of $TX$, and arguing as in \cite[\S 12]{Broecker} we can choose a subset  $\tilde{D}\subset\mathcal{D}$ such
that $\tilde{D}_\lambda=\mathcal{D}\cap T_\lambda X$ is star-shaped with respect
to $0\in T_\lambda X$, $\lambda\in X$, and $F$ is a diffeomorphism from
$\tilde{D}$ onto an open neighbourhood $\mathcal{U}$ of the diagonal in $X\times
X$. Finally, we choose $\varepsilon>0$ small enough so that
$(\tilde{\gamma}(t),\gamma(t))\in\mathcal{U}$, $t\in I$, for any path
$\tilde{\gamma}:I\rightarrow X$ such that
$d(\tilde{\gamma},\gamma)<3\varepsilon$.\\
Our next goal is to construct a path in $X$ which is $2\varepsilon$-close to $\gamma$ but does
not intersect $B(f)$.
\begin{figure}
\centering
{%
\includegraphics[width=0.600\textwidth]{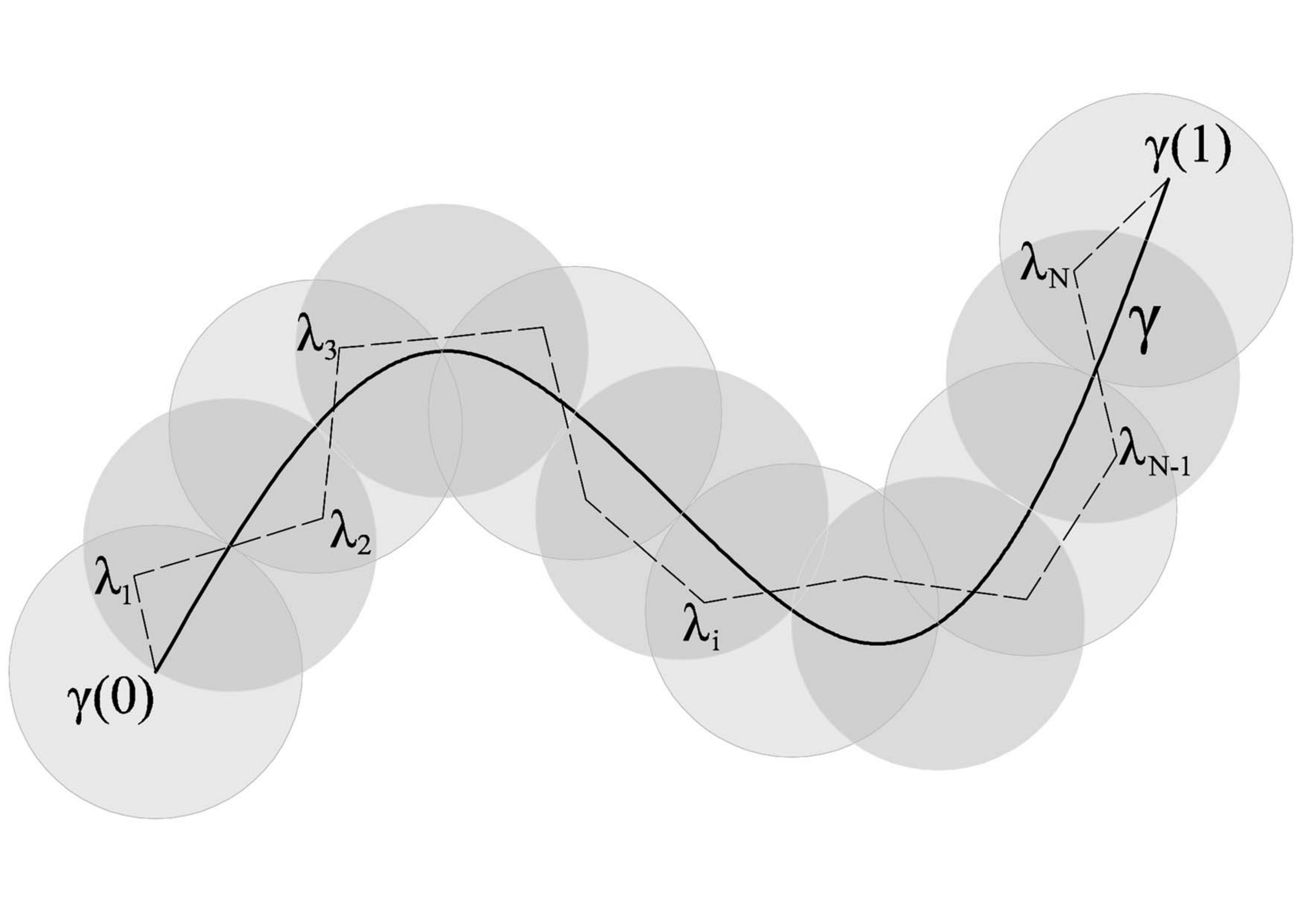}
}
\caption{Construction of  $\gamma_1\ast\ldots\ast\gamma_{N+1}$.}
\label{fig:figura1}
\end{figure}
At first, if $\gamma(I)\subset B(\gamma(0),\varepsilon)$, there exists by assumption a path in $B(\gamma(0),\varepsilon)$ connecting $\gamma(0)$ and $\gamma(1)$ without intersecting $B(f)$. Hence let us assume that $\gamma(I)\not\subset B(\gamma(0),\varepsilon)$. We define 

\begin{align*}
t_1:=\inf\{t\in I: \gamma(t) \in \partial B(\gamma(0), \varepsilon)\}.
\end{align*}
By assumption there exists $\lambda_1\in B(\gamma(0),\varepsilon)\cap
B(\gamma(t_1),\varepsilon)$ such that $\lambda_1\notin B(f)$, and moreover there
exists a path 

\begin{align*}
\gamma_1:[0,t_1]\rightarrow B(\gamma(0),\varepsilon)
\end{align*}
joining $\gamma(0)$ and $\lambda_1$ without intersecting $B(f)$. Note that 

\begin{align*}
\sup_{t\in[0,t_1]}\rho(\gamma_1(t),\gamma(t))<2\varepsilon.
\end{align*}
If $\gamma([t_1,1])\subset B(\gamma(t_1),\varepsilon)$, then we can again finish the construction by using the assumption that $B(f)$ does not disconnect $B(\gamma(t_1),\varepsilon)$. Otherwise, we define 

\begin{align*}
t_2:=\inf\{t_1\leq t\leq 1:\gamma(t)\in\partial B(\gamma(t_1),\varepsilon)\}.
\end{align*}
As before there are $\lambda_2\in B(\gamma(t_1),\varepsilon)\cap
B(\gamma(t_2),\varepsilon)$ such that $\lambda_2\notin B(f)$ and a path 

\begin{align*}
\gamma_2:[t_1,t_2]\rightarrow B(\gamma(t_1),\varepsilon)
\end{align*}
connecting $\lambda_1$ and $\lambda_2$ without intersecting $B(f)$ and such that

\begin{align*}
\sup_{t\in[t_1,t_2]}\rho(\gamma_2(t),\gamma(t))<2\varepsilon.
\end{align*}
If we continue this process, we eventually arrive at an index
$N\in\mathbb{N}$ such that the path 

\begin{align*}
\gamma_N:[t_{N-1},t_N]\rightarrow B(\gamma(t_{N-1}),\varepsilon)
\end{align*}
does not intersect $B(f)$, plus $\gamma_N(t_{N-1})=\gamma_{N-1}(t_{N-1})$,
$\lambda_N:=\gamma_N(t_N)\in B(\gamma(t_N),\varepsilon)$ and 

\begin{align*}
\sup_{t\in[t_{N-1},t_N]}\rho(\gamma_N(t),\gamma(t))<2\varepsilon,
\end{align*}
but

\begin{align*}
\gamma([t_N,1])\cap\partial B(\gamma(t_N),\varepsilon)=\emptyset.
\end{align*}
If this was not true, then we would obtain a sequence $\{t_n\}_{n\in\mathbb{N}}\subset I$, such
that $t_n\rightarrow 1$ but $\gamma(t_n)\nrightarrow\gamma(1)$,
$n\rightarrow\infty$, in contradiction to the continuity of $\gamma$.\\ 
Accordingly, $\gamma(1)\in B(\gamma(t_N),\varepsilon)$ and we can find a path 

\begin{align*}
\gamma_{N+1}:I\rightarrow B(\gamma(t_N),\varepsilon),
\end{align*}
which connects $\lambda_N$ and $\gamma(1)$ without intersecting $B(f)$. Now
the concatenation 

\begin{align*}
\gamma_1\ast\ldots\ast\gamma_{N+1}:I\rightarrow X
\end{align*}
does not intersect $B(f)$ and 

\begin{align*}
d(\gamma_1\ast\ldots\ast\gamma_{N+1},\gamma)<2\varepsilon.
\end{align*}     
Since $B(f)$ is closed, we can use lemma \ref{approx} in order to choose a
smooth path 

\begin{align*}
\tilde{\gamma}:I\rightarrow X\,\, \text{such that}\,\, \tilde{\gamma}(I)\cap
B(f)=\emptyset\,\,\text{and}\,\, d(\tilde{\gamma},\gamma)<3\varepsilon.
\end{align*}
Now we consider the homotopy

\begin{align*}
h:I\times I\rightarrow X,\quad h(s,t)=\exp(s\cdot
(F\mid_{\tilde{D}})^{-1}(\gamma(t),\tilde{\gamma}(t))).
\end{align*}
Note that $h(s,0)=\gamma(0)=\tilde{\gamma}(0)$,
$h(s,1)=\gamma(1)=\tilde{\gamma}(1)$ for all $s\in I$, and
$h(0,\cdot)=\gamma$, $h(1,\cdot)=\tilde{\gamma}$ and hence
$\sfl(\tilde{\gamma},L)=\sfl(\gamma,L)\neq 0$ by the homotopy invariance of the
spectral flow. We define

\begin{align*}
\tilde{f}:I\times H\rightarrow\mathbb{R},\quad
\tilde{f}(t,u)=f(\tilde{\gamma}(t),u),
\end{align*}
which is a $C^2$ function. Each
$\tilde{f}_t=\tilde{f}(t,\cdot):H\rightarrow\mathbb{R}$ has $0\in H$ as critical
point and the associated Hessian is given by $L_{\tilde{\gamma}(t)}$, $t\in I$.
According to theorem \ref{Bif}, there exist a sequence
$\{t_n\}_{n\in\mathbb{N}}\subset I$ and a sequence
$\{u_n\}_{n\in\mathbb{N}}\subset H$ such that $u_n\neq 0$ is a critical point of
$\tilde{f}_{t_n}$, $n\in\mathbb{N}$, and $(t_n,u_n)\rightarrow(t^\ast,0)$ for
some $t^\ast\in I$. Now the sequence
$\{(\tilde{\gamma}(t_n),u_n)\}_{n\in\mathbb{N}}\subset X\times H$ shows that
$\gamma(t^\ast)\in X$ is a bifurcation point of $f$ on $\tilde{\gamma}(I)$,
contradicting the construction of $\tilde{\gamma}$, and this ends the 
proof.

\subsection{Proof of theorem \ref{theoremII}}
At first we prove statement i) arguing by contradiction. 
\begin{figure}[ht]
\centering
{%
\includegraphics[width=0.400\textwidth]{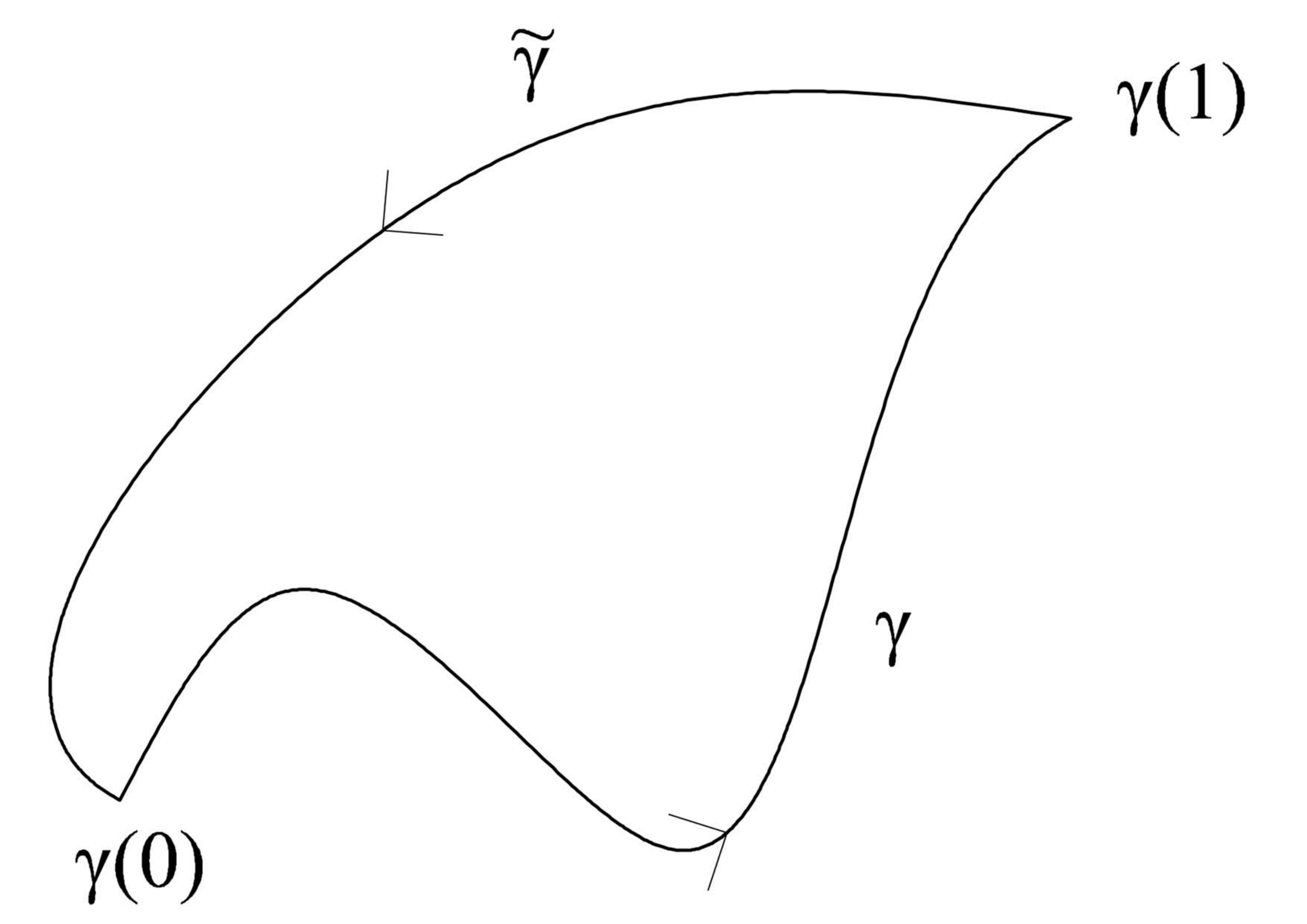}
}
\caption{A closed path with vanishing spectral flow.}
\label{fig:figura2}
\end{figure}
Let $\gamma$ be an admissible path such that
$\sfl(\gamma,L)\neq 0$ and assume that $X\setminus B(f)$ is connected. Then we
can find a path $\tilde{\gamma}$ such that $\tilde{\gamma}(0)=\gamma(1)$,
$\tilde{\gamma}(1)=\gamma(0)$ and $\tilde{\gamma}(I)\cap B(f)=\emptyset$. Since
$B(f)$ is closed we can assume without loss of generality that $\tilde{\gamma}$
is smooth by lemma \ref{approx}. Arguing as in the last paragraph of the
previous section, we conclude from theorem \ref{Bif} that
$\sfl(\tilde{\gamma},L)=0$.

Now we consider the concatenation $\gamma\ast\tilde{\gamma}$ which is a closed
path. Since $X$ is simply connected, $\gamma\ast\tilde{\gamma}$
is homotopic to the constant path $\gamma'(t)\equiv\gamma(0)$, $t\in I$, by means of a homotopy leaving $\gamma(0)$ fixed. Hence

\begin{align*}
\sfl(\gamma,L)=\sfl(\gamma,L)+\sfl(\tilde{\gamma},L)=\sfl(\gamma\ast\tilde{
\gamma},L)=\sfl(\gamma',L)=0
\end{align*}  
which is a contradiction to our assumption.\\
\begin{figure}[ht]
\centering
{%
\includegraphics[width=0.500\textwidth]{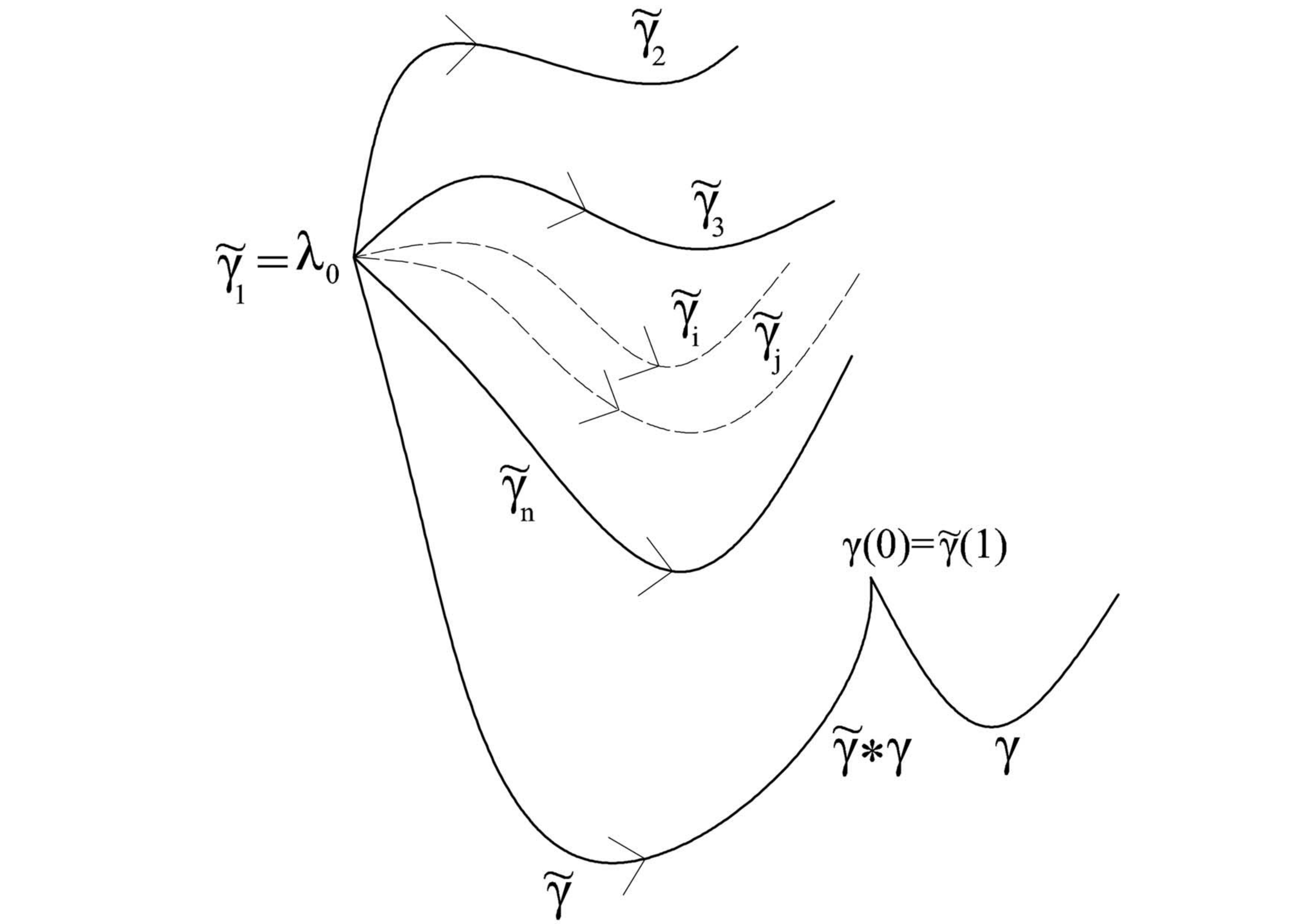}
}
\caption{The inductive construction of $\tilde{\gamma}_k$.}
\label{fig:figura3}
\end{figure}
In order to prove ii) we first construct inductively a sequence
$\tilde{\gamma}_k$, $k\in\mathbb{N}$, of admissible paths in $X$ such that
$\sfl(\tilde{\gamma}_i,L)\neq\sfl(\tilde{\gamma}_j,L)$ for all $i\neq j$ and
$\tilde{\gamma}_k(0)=\lambda_0$, $k\in\mathbb{N}$, for some $\lambda_0\in X$.\\
Let $\lambda_0\in X$ be such that $L_{\lambda_0}$ is invertible. Set
$\tilde{\gamma}_1\equiv \lambda_0$ and assume henceforth that we have already
constructed paths $\tilde{\gamma}_1,\ldots,\tilde{\gamma}_n$ such that
$\sfl(\tilde{\gamma}_i,L)\neq\sfl(\tilde{\gamma}_j,L)$ for $i\neq j$ and
$\tilde{\gamma}_i(0)=\lambda_0$ for all $1\leq i\leq n$.\\
We choose an admissible path $\gamma$ such that
\begin{align*}
|\sfl(\gamma,L)|>\max_{1\leq i,j\leq
n}|\sfl(\tilde{\gamma}_i,L)-\sfl(\tilde{\gamma}_j,L)|
\end{align*} 
and a path $\tilde{\gamma}$ such that $\tilde{\gamma}(0)=\lambda_0$ and
$\tilde{\gamma}(1)=\gamma(0)$. If $\sfl(\tilde{\gamma}\ast\gamma,L)\neq
\sfl(\tilde{\gamma}_i,L)$ for all $1\leq i\leq n$, then we set
$\tilde{\gamma}_{n+1}=\tilde{\gamma}\ast\gamma$. Otherwise we set
$\tilde{\gamma}_{n+1}=\tilde{\gamma}$. In order to justify our choice, assume
that $\sfl(\tilde{\gamma}\ast\gamma,L)=\sfl(\tilde{\gamma}_i,L)$ and 
$\sfl(\tilde{\gamma},L)=\sfl(\tilde{\gamma}_j,L)$ for some $1\leq i,j\leq n$.
Then

\begin{align*}
\sfl(\tilde{\gamma}_i,L)=\sfl(\tilde{\gamma}\ast\gamma,L)=\sfl(\tilde{\gamma},
L)+\sfl(\gamma,L)=\sfl(\tilde{\gamma}_j,L)+\sfl(\gamma,L)
\end{align*}
which is a contradiction to the choice of $\gamma$.\\
Hence there exists a sequence of admissible paths $\tilde{\gamma}_k$,
$k\in\mathbb{N}$, such that
$\sfl(\tilde{\gamma}_i,L)\neq\sfl(\tilde{\gamma}_j,L)$ for all $i\neq j$ and
$\tilde \gamma_k(0)=\lambda_0$, $k\in\mathbb{N}$, for some $\lambda_0\in X$. We claim
that the endpoints $\tilde{\gamma}_i(1)$, $i\in\mathbb{N}$, all lie in different
path components of $X\setminus B(f)$.\\
Assume on the contrary that there exists a path $\gamma'$ connecting
$\tilde{\gamma}_i(1)$ and $\tilde{\gamma}_j(1)$ and such that $\gamma'(I)\cap
B(f)=\emptyset$. Again we can suppose by lemma \ref{approx} that $\gamma'$ is smooth without loss of
generality and we conclude that $\sfl(\gamma',L)=0$ by
theorem \ref{Bif}. The concatenation
$\tilde{\gamma}_i\ast\gamma'\ast\tilde{\gamma}_{-j}$ is a closed path, where we
denote by $\tilde{\gamma}_{-j}(t)=\tilde{\gamma}_j(1-t)$, $t\in I$, the inverse
path. We infer as in the proof of i) above that
$\sfl(\tilde{\gamma}_i\ast\gamma'\ast\tilde{\gamma}_{-j},L)=0$ and obtain
\begin{figure}[ht]
\centering
{%
\includegraphics[width=0.400\textwidth]{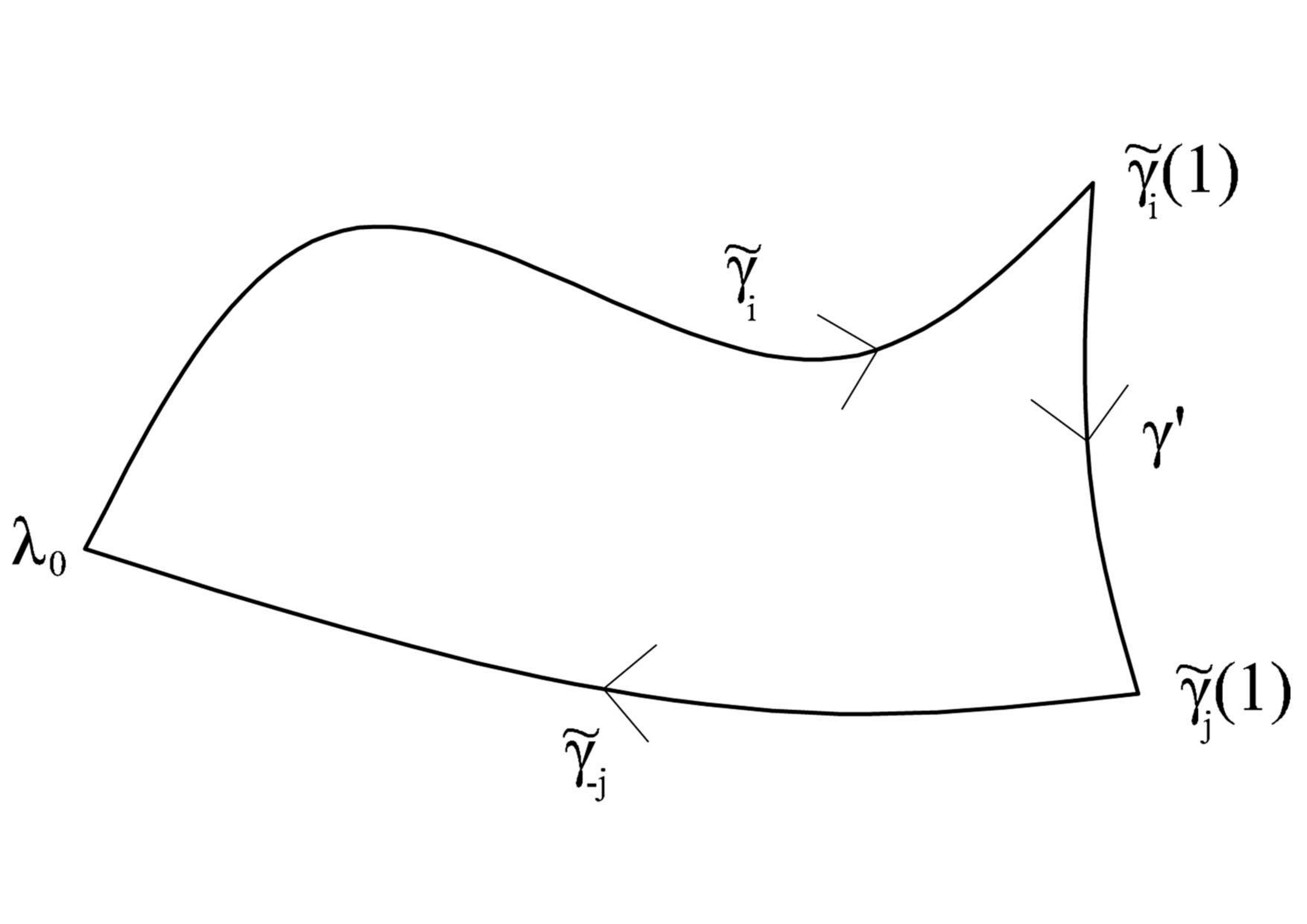}
}
\caption{The construction of $\gamma'$.}
\label{fig:figura4}
\end{figure}
\begin{align*}
0=\sfl(\tilde{\gamma}_i\ast\gamma'\ast\tilde{\gamma}_{-j},L)=\sfl(\tilde{\gamma}
_i,L)+\sfl(\gamma',L)+\sfl(\tilde{\gamma}_{-j},L)=\sfl(\tilde{\gamma}_i,
L)-\sfl(\tilde{\gamma}_j,L),
\end{align*} 
which is a contradiction to the existence of the paths $\tilde{\gamma}_k$,
$k\in\mathbb{N}$. This concludes the proof.

\section{Families of geodesics}
In this final section we apply our theory to families of geodesics in
semi-Riemannian manifolds in order to improve the
corresponding result from \cite[\S 4]{AleBif}. We begin by recalling briefly
some constructions from \cite{Pejsachowicz}. The latter article is, together with \cite{MussoPejsachowicz}, the main reference for a more detailed introduction
to the present setting.\\
Let $M$ be a smooth connected manifold of dimension $n\in\mathbb{N}$. We denote
by $\Omega$ the Hilbert manifold of all $H^1$ curves in $M$, which is modelled
on the Sobolev space $H^1(I,\mathbb{R}^n)$. The tangent space $T_\gamma\Omega$
at an element $\gamma\in\Omega$ can be identified in a natural way with the
space $H^1(\gamma)$ of all $H^1$ vector fields along $\gamma$. The endpoint map

\begin{align*}
\pi:\Omega\rightarrow M\times M,\quad \pi(\gamma)=(\gamma(0),\gamma(1))
\end{align*}
is a submersion and hence for each $(p,q)\in M\times M$ the fibre

\begin{align*}
\Omega_{pq}=\{\gamma\in\Omega:\gamma(0)=p,\gamma(1)=q\}
\end{align*}
is a submanifold of codimension $2n$ whose tangent space at
$\gamma\in\Omega_{pq}$ is the subspace

\begin{align*}
H^1_0(\gamma)=\{\xi\in H^1(\gamma):\xi(0)=0,\,\xi(1)=0\}
\end{align*}
of $H^1(\gamma)$. Note that the family of spaces $H^1_0(\gamma)$,
$\gamma\in\Omega$, are the fibres of the vertical bundle $TF(\pi)$ of the submersion $\pi$.\\
We have for each semi-Riemannian metric $g$ on $M$ the associated energy
functional

\begin{align*}
f_g:\Omega\rightarrow\mathbb{R},\quad
f_g(\gamma)=\frac{1}{2}\int^1_0{g(\gamma',\gamma')\,dt}.
\end{align*}
The critical points of the restriction of $f_g$ to the submanifold $\Omega_{pq}$
of $\Omega$ are precisely the geodesics with respect to $g$ joining $p$ and
$q$.\\
Now let $g$ and $p,q\in M$ be fixed and let $\gamma\in\Omega_{pq}$ be a
geodesic. Then the second variation of $f_g$ at $\gamma$ is the quadratic form

\begin{align*}
h_\gamma(\xi)=\int^1_0{g\left(\frac{\nabla}{dt}\xi,\frac{\nabla}{dt}\xi\right)\,
dt}-\int^1_0{g(R(\gamma',\xi)\gamma',\xi)\,dt}
\end{align*} 
on the space $H^1_0(\gamma)$, where $\frac{\nabla}{dt}$ denotes the covariant
derivative along $\gamma$ and $R$ is the curvature of the Levi-Civita connection induced by $g$. If $g$ is
a Riemannian metric, then the Morse index of $\gamma$ is by definition the
dimension of the maximal subspace of $H^1_0(\gamma)$ on which $h_\gamma$ is
negative definite. For a non-Riemannian metric, there always exists an infinite
dimensional subspace on which $h_\gamma$ is negative definite, but it is shown
in \cite{Pejsachowicz} how one can define an index for geodesics in general
semi-Riemannian manifolds by using the spectral flow. The construction goes as
follows:\\
the geodesic $\gamma$ induces canonically a path $\Gamma$ in $\Omega$ by setting
$\Gamma(s)[t]=\gamma(s\cdot t)$, $s,t\in I$. Each $\Gamma(s)$ is a critical
point of the restriction of $f_g$ to $\Omega_{\gamma(0),\gamma(s)}$ and the
associated second variations $h_{\Gamma(s)}$,  $s\in I$, can be regarded as a map
on the pullback bundle $\Gamma^\ast(TF(\pi))$, which is a Hilbert bundle over
the unit interval $I$ having as typical fibre the Sobolev space
$H^1_0(I,\mathbb{R}^n)$. Now choose a global trivialisation $\psi:I\times
H^1_0(I,\mathbb{R}^n)\rightarrow\Gamma^\ast(TF(\pi))$ and obtain a path of
quadratic forms $h_{\Gamma(s)}(\psi(s,u))$, $(s,u)\in I\times
H^1_0(I,\mathbb{R}^n)$, whose Riesz representations $L_s$, $s\in I$, are shown
to be Fredholm in \cite[Prop. 3.1]{Pejsachowicz}. Moreover, it is easy to see
that $L_0$ is invertible. If $\ker h_\gamma=0$, then $L_{1}$ is invertible as
well and we call the geodesic $\gamma$ \textit{non-degenerate}. In this case we
obtain a path $L:I\rightarrow\Phi_S(H^1_0(I,\mathbb{R}^n))$ having invertible endpoints and we
define the \textit{spectral index} of the non-degenerate geodesic $\gamma$ by 

\begin{align*}
\mu(\gamma)=-\sfl(L)\in\mathbb{Z}.
\end{align*}
If $g$ is a Riemannian metric, then $\mu(\gamma)$ coincides with the Morse index
of $\gamma$, which can be computed as the total number of conjugate points along
$\gamma$ according to the classical Morse index theorem. Moreover, let us
mention that $\mu(\gamma)$ can also be obtained in general by counting
algebraically the number of conjugate points along $\gamma$, which is the
content of the semi-Riemannian Morse index theorem \cite{Pejsachowicz}.\\
Now let $X$ be a smooth manifold of dimension $n$ and let
$p:\mathcal{S}^{2}_\nu(M)\rightarrow M$ be the bundle of non-degenerate
symmetric two-forms of index $\nu$ on $M$. A \textit{family of semi-Riemannian
metrics} of index $\nu$ on $M$ is a smooth map $g:X\times
M\rightarrow\mathcal{S}^{2}_\nu(M)$ such that each map
$g(\lambda,\cdot):M\rightarrow\mathcal{S}^{2}_\nu(M)$, $\lambda\in X$, is a section of $p$. We
assume that there exists a smooth map $\sigma:X\times I\rightarrow M$ such that
each $\sigma(\lambda)=\sigma(\lambda,\cdot)\in\Omega$, $\lambda\in X$, is a
geodesic in $M$ with respect to the metric $g_\lambda$. We refer to $\sigma$ as
the trivial branch of geodesics.\\
We call $\lambda^\ast\in X$ a \textit{bifurcation point for geodesics} from the
trivial branch $\sigma$ if there exists a sequence
$(\lambda_n,\gamma_n)\rightarrow(\lambda^\ast,\sigma(\lambda^\ast))$ in
$X\times\Omega$ such that each $\gamma_n$ is a geodesic with respect to
$g_{\lambda_n}$,
\begin{align*}
\gamma_n(0)=\sigma(\lambda_n,0),\quad \gamma_n(1)=\sigma(\lambda_n,1)
\end{align*}    
and $\gamma_n\neq\sigma(\lambda_n)$, $n\in\mathbb{N}$.\footnote{%
We want to point out that the definition of a bifurcation point of geodesics is stated in an incorrect way in \cite{MussoPejsachowicz} and that all results proved in that paper actually concern bifurcation with respect to the definition given here.} 
 We denote the set of all
bifurcation points by $\mathcal{B}_\sigma\subset X$.\\
The main result of this section reads as follows:

\begin{theorem}\label{theoremIII}
Let $X$ be a smooth connected manifold of finite dimension $n$.
\begin{enumerate}
	\item[i)] Assume that there exist $\lambda_0,\lambda_1\in X$ such that
$\sigma(\lambda_0)$, $\sigma(\lambda_1)$ are non-degenerate and 
	
	\begin{align*}
	\mu(\sigma(\lambda_0))\neq\mu(\sigma(\lambda_1)).
	\end{align*}
Then:

	\begin{enumerate}
	\item[a)] Either $\mathcal{B}_\sigma$ has interior points or it disconnects some
open connected subset of $X$. 
	\item[b)] $\dim\mathcal{B}_\sigma\geq n-1$.
	\item[c)] If $X$ is simply connected, then $\mathcal{B}_\sigma$ disconnects $X$. 
\end{enumerate} 
\item[ii)] If $X$ is simply connected and there exists a sequence
$\{\lambda_k\}_{k\in\mathbb{N}}\subset X$ such that $\sigma(\lambda_k)$ is
non-degenerate for all $k\in\mathbb{N}$ and

\begin{align*}
\lim_{k \to +\infty}|\mu(\sigma(\lambda_k))|=+\infty,
\end{align*}
then $X\setminus\mathcal{B}_\sigma$ has infinitely many path components.
\end{enumerate}
\end{theorem}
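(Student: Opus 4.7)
The plan is to reduce Theorem \ref{theoremIII} to Theorems \ref{theoremI} and \ref{theoremII} by recasting the geodesic problem as a bifurcation problem for a family of functionals on the fixed Hilbert space $H = H^1_0(I, \mathbb{R}^n)$. Concretely, I will construct a $C^2$ map $f: X \times H \to \mathbb{R}$ such that $0 \in H$ is a critical point of every $f_\lambda$, such that nontrivial critical points of $f_\lambda$ near $0$ correspond bijectively to geodesics of $g_\lambda$ close to $\sigma(\lambda)$ joining $\sigma(\lambda, 0)$ and $\sigma(\lambda, 1)$, and such that $B(f) = \mathcal{B}_\sigma$.

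The construction proceeds as in \cite{MussoPejsachowicz}: fix an auxiliary Riemannian metric on $M$ and use its exponential map to parametrise, for each $\lambda$, a neighbourhood of $\sigma(\lambda)$ in $\Omega_{\sigma(\lambda,0), \sigma(\lambda,1)}$ by the tangent space $H^1_0(\sigma(\lambda))$. These fibres assemble into the Hilbert bundle $\sigma^\ast(TF(\pi)) \to X$, which is trivialisable by Kuiper's theorem since its typical fibre is separable and infinite-dimensional and $X$ is paracompact. A choice of trivialisation provides the family $f$. Its Hessian $L_\lambda$ at $0$ coincides, up to cogredience, with the Riesz representation of the second variation $h_{\sigma(\lambda)}$, and is thus Fredholm by \cite[Prop.~3.1]{Pejsachowicz} and invertible precisely when $\sigma(\lambda)$ is non-degenerate. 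The inverse function theorem then yields $B(f) = \mathcal{B}_\sigma$.

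The key computation is the identity
\begin{align*}
\sfl(\eta, L) = \mu(\sigma(\eta(0))) - \mu(\sigma(\eta(1)))
\end{align*}
for every admissible path $\eta: I \to X$. To prove it, consider the two-parameter family $\Gamma: I \times I \to \Omega$, $\Gamma(\tau, s)(t) = \sigma(\eta(\tau))(st)$, and the resulting continuous map $I \times I \to \Phi_S(H)$ obtained from the second variations via the trivialisation. The four corners of the square carry invertible Hessians: at $s = 0$ the curve is constant and the second variation $\int_0^1 g(\xi', \xi')\,dt$ is always invertible on $H^1_0$, while at $s = 1$, $\tau \in \{0,1\}$, invertibility is the non-degeneracy hypothesis on $\sigma(\lambda_0)$ and $\sigma(\lambda_1)$. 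Comparing the two paths from $(0,0)$ to $(1,1)$ going bottom-then-right and left-then-top, and applying the homotopy invariance of the spectral flow through the square with fixed invertible corners, gives
\begin{align*}
\sfl(\mathrm{bottom}) + \sfl(\mathrm{right}) = \sfl(\mathrm{left}) + \sfl(\mathrm{top}).
\end{align*}
The bottom runs through Hessians at constant curves, invertible throughout, so its contribution vanishes. The left and right $s$-paths compute $-\mu(\sigma(\eta(0)))$ and $-\mu(\sigma(\eta(1)))$ by the very definition of the spectral index, and the top coincides with $L\circ\eta$ up to cogredience, of spectral flow $\sfl(\eta, L)$. Rearranging yields the claim.

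With these ingredients the proof concludes quickly. For part (i), pick any path $\eta$ joining $\lambda_0$ and $\lambda_1$ in the connected manifold $X$; then $\sfl(\eta, L) = \mu(\sigma(\lambda_0)) - \mu(\sigma(\lambda_1)) \neq 0$, and (a), (b), (c) follow respectively from Theorem \ref{theoremI}, Corollary \ref{cor}, and Theorem \ref{theoremII}(i), noting that the finite-dimensional paracompact manifold $X$ admits smooth partitions of unity. For part (ii), take the first point $\lambda_1$ of the sequence as basepoint and join it to each $\lambda_k$ by an admissible path $\eta_k$ (admissibility holds since $\sigma(\lambda_1)$ and $\sigma(\lambda_k)$ are non-degenerate); then $|\sfl(\eta_k, L)| = |\mu(\sigma(\lambda_1)) - \mu(\sigma(\lambda_k))| \to \infty$, and Theorem \ref{theoremII}(ii) applies. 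The main obstacle will be the construction in the second paragraph --- producing a smooth global family $f$ via a compatible bundle trivialisation of $\sigma^\ast(TF(\pi))$ rather than merely pointwise Hessians --- after which the spectral-flow identity and the application of the abstract theorems are essentially routine.
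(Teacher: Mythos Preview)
Your proposal is correct and follows essentially the same route as the paper: pull back the fibration $\pi$ along the endpoint data of $\sigma$, trivialise a tubular neighbourhood of the trivial branch to obtain a family $\tilde f:X\times H\to\mathbb{R}$ with $B(\tilde f)=\mathcal{B}_\sigma$, establish the identity $\sfl(\eta,L)=\mu(\sigma(\eta(0)))-\mu(\sigma(\eta(1)))$, and then invoke Theorems \ref{theoremI}, \ref{theoremII} and Corollary \ref{cor}. The only cosmetic difference is that where the paper appeals to the vector-bundle neighbourhood theorem of \cite[App.~A]{MussoPejsachowicz} and to \cite[Prop.~8.4]{MussoPejsachowicz}, you instead sketch these ingredients directly (auxiliary Riemannian exponential map plus Kuiper for the trivialisation, and the two-parameter square homotopy for the spectral-flow identity); both packages produce the same objects and the same conclusion.
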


\begin{proof}
We consider the endpoint map 

\begin{align*}
e:X\rightarrow M\times M,\quad e(\lambda)=(\sigma(\lambda,0),\sigma(\lambda,1)).
\end{align*}
The pullback $e^\ast(\pi)$ of the submersion $\pi$ can be defined in the usual way and its total space
is given by

\begin{align*}
E:=\{(\lambda,\gamma)\in X\times\Omega:e(\lambda)=\pi(\gamma)\}\subset
X\times\Omega.
\end{align*}
Note that
$E_\lambda=(e^\ast(\pi))^{-1}(\lambda)=\Omega_{\sigma(\lambda,0),\sigma(\lambda,
1)}$, $\lambda\in X$. By standard transversality arguments (cf. \cite[II,\S
2]{Lang}), $e^\ast(\pi)$ is a submersion as well and we obtain a commutative
diagram

\begin{align}\label{pullbackpi}
\xymatrix{
E\ar[r]^\iota\ar[d]_(0.42){e^\ast(\pi)}&\Omega\ar[d]^(0.45){\pi}\\
X\ar[r]^(0.37){e}&M\times M
}
\end{align}
Because of the commutativity of \eqref{pullbackpi}, the map
$\sigma:X\rightarrow\Omega$ induces a section $\tilde{\sigma}:X\rightarrow E$ of
$E$.\\
We now consider the smooth function

\begin{align*}
f:E\rightarrow\mathbb{R},\quad
f(\lambda,\gamma)=\frac{1}{2}\int^1_0{g_\lambda(\gamma',\gamma')\,dt}
\end{align*}
and note that $\gamma\in E_\lambda=\Omega_{\sigma(\lambda,0),\sigma(\lambda,1)}$
is a critical point of $f_\lambda:=f\mid_{E_\lambda}$ if and only if $\gamma$ is
a geodesic for the metric $g_\lambda$, $\lambda\in X$. Hence we have
reduced the study of $\mathcal{B}_\sigma$ to the bifurcation of critical points of the
functional $f$ from the branch $\tilde{\sigma}(X)$.\\
In order to apply theorems \ref{theoremI} and \ref{theoremII}, we need the
vector-bundle neighbourhood theorem, which is proved in \cite[App.
A]{MussoPejsachowicz} for bundles over compact base spaces. However, it is
easily seen by arguing as in \cite[Lemma 12.6]{Broecker} that its statement can be
proved in the non-compact case along the same lines. Accordingly, there exists a
trivial Hilbert bundle $X\times H$ over $X$ and a smooth map $\psi:X\times
H\rightarrow E$ such that $\psi$ is a diffeomorphism onto an open neighbourhood
of $\tilde{\sigma}(X)$ in $E$ and $\psi(\lambda,0)=\tilde{\sigma}(\lambda)$,
$\lambda\in X$. Now consider $\tilde{f}:X\times H\rightarrow\mathbb{R}$,
$\tilde{f}=f\circ\psi$. Since the restriction $\psi_\lambda$ to the fibre is a
diffeomorphism onto an open subset of $E_\lambda$, we infer that $u\in H$ is a
critical point of $\tilde{f}_\lambda$ if and only if $\psi_\lambda(u)$ is a
critical point of $f_\lambda$. In particular, $0\in H$ is a critical point of
all maps $\tilde{f}_\lambda$, $\lambda\in X$, and, moreover, $B(\tilde{f})=\mathcal{B}_\sigma$.\\
The Hessian $\tilde{L}_\lambda$ of $\tilde{f}_\lambda$ at $0\in H$ is the Riesz
representation of the quadratic form

\begin{align*}
\tilde{h}_\lambda(u)=h_\lambda((T_0\psi_\lambda) u),\quad u\in H,\,\,\lambda\in
X,
\end{align*} 
and $\tilde{L}:X\rightarrow\Phi_S(H)$ is a continuous family of selfadjoint
Fredholm operators.\\
We now choose any path $\gamma:I\rightarrow X$ such that $\gamma(0)=\lambda_0$
and $\gamma(1)=\lambda_1$ and we consider the composite map $\tilde{L}\circ\gamma:I\rightarrow\Phi_S(H)$, which is a path of selfadjoint Fredholm operators having invertible endpoints. Arguing as in \cite[Prop.
8.4]{MussoPejsachowicz}, we obtain

\begin{align*}
\sfl(\gamma,\tilde{L})=\sfl(\tilde{L}
\circ\gamma)=\mu(\sigma(\lambda_0))-\mu(\sigma(\lambda_1)).
\end{align*}
It is now clear that the theorems \ref{theoremI} and \ref{theoremII} apply to
the family $\tilde{f}:X\times H\rightarrow\mathbb{R}$. Since
$B(\tilde{f})=\mathcal{B}_\sigma$, we finally obtain the claim.  
\end{proof}

We finish this section by giving an example of theorem \ref{theoremIII}. Let $g$
be a semi-Riemannian metric on a smooth connected manifold $M$ of finite dimension $n$
such that $(M,g)$ is geodesically complete. We set $X:=TM$ and define as trivial branch of geodesics

\begin{align*}
\sigma:TM\times I\rightarrow M,\quad \sigma(v,t)=\exp_{\pi_M(v)}(t\cdot v),
\end{align*}
where $\pi_M:TM\rightarrow M$ denotes the projection of the tangent bundle. Here
we use that by assumption the exponential map is defined on the whole 
tangent bundle $TM$. Note that the family
$\sigma$ consists of all geodesics in $(M,g)$.\\
The constant paths $\sigma(0,t)\equiv p\in M$, $0\in T_pM\subset TM$ are
geodesics having vanishing spectral index $\mu(\sigma(0))=0$. Accordingly, if
there exists $v\in TM$ such that $\sigma(v)$ is non-degenerate and
$\mu(\sigma(v))\neq 0$, we infer that $\dim\mathcal{B}_\sigma\geq 2n-1$. Now let us
assume in addition that $\pi_1(M)=0$. Then $TM$ is simply connected and we
obtain that $\mathcal{B}_\sigma$ disconnects $TM$. Finally, if we can find a sequence
$\{v_k\}_{k\in\mathbb{N}}\subset TM$ such that $\sigma(v_k)$, $k\in\mathbb{N}$,
is non-degenerate and $|\mu(\sigma(v_k))|\rightarrow\infty$, 
$k\rightarrow\infty$, then $TM\setminus\mathcal{B}_\sigma$ has infinitely many path
components. Examples of the latter case are provided by the spheres $S^n$,
$n\geq 2$, with their usual Riemannian metrics.

\thebibliography{99999999}

\bibitem[APS76]{AtiyahPatodi} M.F. Atiyah, V.K. Patodi, I.M. Singer, 
\textbf{Spectral Asymmetry and Riemannian Geometry III}, Math. Proc. Cambridge
Philos. Soc. \textbf{79} (1976), 71-99.

\bibitem[BJ73]{Broecker} T. Br\"ocker, K. J\"anich, 
\textbf{Introduction to differential topology},  Cambridge University Press,
Cambridge-New York, 1982.

\bibitem[BT03]{BuffoniToland} B. Buffoni, J. Toland, \textbf{Analytic Theory of Global Bifurcation}, Princeton Ser. Appl. Math., Princeton University Press, 2003.

\bibitem[Da13]{Dancer} E.N. Dancer, \textbf{Some bifurcation results for rapidly growing nonlinearities}, Discrete Contin. Dyn. Syst. \textbf{33} (2013), 153-161.

\bibitem[Fed90]{Dimension} V.V. Fedorchuk, \textbf{The Fundamentals of Dimension
Theory}, 
Encyclopaedia of Mathematical Sciences \textbf{17}, General Topology I, 1990,
91-202.

\bibitem[FPR99]{SFLPejsachowicz} P.M. Fitzpatrick, J. Pejsachowicz, L. Recht, 
\textbf{Spectral Flow and Bifurcation of Critical Points of Strongly-Indefinite
Functionals Part I: General Theory},
 J. Funct. Anal. \textbf{162} (1999), 52-95.

\bibitem[FPR00]{JacoboSFLII} P.M. Fitzpatrick, J. Pejsachowicz, L. Recht,
\textbf{Spectral Flow and Bifurcation 
of Critical Points of Strongly-Indefinite Functionals Part II: Bifurcation of
periodic orbits of Hamiltonian systems}, 
J. Differential Equations \textbf{163} (2000), 18-40.

\bibitem[GGPS11]{Gladiali} F. Gladiali, M. Grossi, F. Pacella, P.N. Srikanth, \textbf{Bifurcation and symmetry breaking for a class of semilinear elliptic equations in an annulus}, Calc. Var. Partial Differential Equations \textbf{40} (2011), 295-317.

\bibitem[Kr64]{Krasnoselskii} M.A. Krasnosel'skii, \textbf{Topological Methods
in the Theory of
 Nonlinear Integral Equations}, Pergamon, Oxford, 1964.

\bibitem[La95]{Lang} S. Lang, \textbf{Differential and Riemannian Manifolds},
Grad. Texts in Math.
 \textbf{160}, Springer, 1995.
 
\bibitem[MSTT06]{SerraTerracini} G. Molteni, E. Serra, M.Tarallo, S. Terracini, \textbf{Asymptotic Resonance, Interaction of Modes and Subharmonic Bifurcation}, Arch. Rational Mech. Anal. \textbf{182} (2006), 77-123.

\bibitem[MPP05]{Pejsachowicz} M. Musso, J. Pejsachowicz, A. Portaluri, \textbf{A
Morse Index Theorem 
for Perturbed Geodesics on Semi-Riemannian Manifolds}, Topol. Methods
Nonlinear Anal. \textbf{25} (2005), 69-99. 

\bibitem[MPP07]{MussoPejsachowicz} M. Musso, J. Pejsachowicz, A. Portaluri,
\textbf{Morse Index and 
Bifurcation for p-Geodesics on Semi-Riemannian Manifolds}, ESAIM Control Optim.
Calc. Var \textbf{13} (2007), 598-621.

\bibitem[Na90]{Nayatani} S. Nayatani, \textbf{Lower bounds for the Morse index of complete minimal surfaces in Euclidean 3-space}, Osaka J. Math. \textbf{27} (1990), 453-464.

\bibitem[PPT03]{PicPorTau} P. Piccione, A. Portaluri, D. V. Tausk, \textbf{Spectral flow, Maslov index and bifurcation of semi-Riemannian geodesics},  
Ann. Global Anal. Geom. \textbf{25} (2004), no. 2, 121--149.

\bibitem[PP05]{PicPor}
P. Piccione, A. Portaluri, \textbf{A bifurcation result for semi-Riemannian
trajectories of the Lorentz force 
equation}, J. Differential Equations \textbf{210} (2005), no. 2, 233--262.

\bibitem[Po11]{AleBif} A. Portaluri, \textbf{A K-theoretical invariant and
bifurcation for a parameterized 
family of functionals}, J. Math. Anal. Appl. \textbf{377} (2011), 762-770.

\bibitem[Ro02]{Rossman} W. Rossman, \textbf{Lower bounds for Morse index of constant mean curvature tori}, Bull. London Math. Soc. \textbf{34} (2002), 599-609.

\bibitem[Wa12]{DimDirac} N. Waterstraat, \textbf{On the space of metrics having
non-trivial harmonic spinors}, 
submitted, arXiv:1206.0499v1.

\vspace{1cm}
Alessandro Portaluri\\
Department of Agriculture, Forest and Food Sciences\\
Universit\`a degli studi di Torino\\
Via Leonardo da Vinci, 44\\
10095 Grugliasco (TO)\\
Italy\\
E-mail: alessandro.portaluri@unito.it

\vspace{1cm}
Nils Waterstraat\\
Dipartimento di Scienze Matematiche\\
Politecnico di Torino\\
Corso Duca degli Abruzzi, 24\\
10129 Torino\\
Italy\\
E-mail: waterstraat@daad-alumni.de

\end{document}